\theoremstyle{plain}
\newtheorem{thm}{Theorem}[section]
\newtheorem*{thm*}{Theorem}
\newtheorem*{lem*}{Lemma}
\newtheorem{prop}[thm]{Proposition}
\newtheorem{lem}[thm]{Lemma}
\theoremstyle{definition}
\newtheorem{defn}{Definition}[section]
\DeclareMathOperator{\seqsep}{sep}
\begin{document}

\title{Property $(\beta)$ and uniform quotient maps}

\author{Vegard Lima}
\author{N. Lovasoa Randrianarivony}

\address{Aalesund, University College, Postboks 1517, 6025 Aalesund, Norway}
\email{Vegard.Lima@gmail.com}

\address{Department of Mathematics and Computer Science\\
Saint Louis University \\
St. Louis, MO  63103, USA}
\email{nrandria@slu.edu}

\keywords{Uniform quotients, Lipschtiz quotients, Rolewicz property ($\beta$)}
\subjclass[2010]{46B80, 46B25, 46T99}

\thanks{The second author was supported by NSF grant DMS--0915349.}

\maketitle

\begin{abstract}
In 1999, Bates, Johnson, Lindenstrauss, Preiss and Schechtman asked whether a Banach space that is a uniform quotient of $\ell_p$, $1 < p \neq 2 < \infty$, must be isomorphic to a linear quotient of $\ell_p$.  We apply the geometric property $(\beta)$ of Rolewicz to the study of uniform and Lipschitz quotient maps, and answer the above question positively for the case $1<p<2$.  We also give a necessary condition for a Banach space to have $c_0$ as a uniform quotient.
\end{abstract}

\section{Introduction}

Consider two metric spaces $X$ and $Y$, and a map $T:X \to Y$.  The following definitions are taken for example from \cite[Definition 11.10]{BenyaminiLindenstrauss2000}.  The map $T$ is called \emph{co-uniformly continuous} if for every $\varepsilon >0$ there exists $\delta >0$ such that for every $x$ in $X$,
\begin{equation*}
B(Tx, \delta) \subseteq T\left ( B(x,\varepsilon) \right).
\end{equation*}
(Throughout the present article, all balls are considered closed.)  If $\delta$ can be chosen to be linear in $\varepsilon$,
then $T$ is called \emph{co-Lipschitz}.
The map $T$ is called a \emph{uniform quotient} (resp. \emph{Lipschitz quotient}) if it is both uniformly continuous and co-uniformly continuous (resp. Lipschitz and co-Lipschitz).  If $T$ is also surjective, then $Y$ is called a \emph{uniform quotient} (resp. \emph{Lipschitz quotient}) of $X$.

\indent

Bates, Johnson, Lindenstrauss, Preiss and Schechtman have shown in \cite{BatesJohnsonLindenstraussPreissSchechtman1999} that the linear quotients of $L_p[0,1]$ are isomorphically the only possible Banach spaces that can be a uniform quotient of $L_p[0,1]$ for $1<p<\infty$.  Using a property called UAAP they showed that any Banach space that is a uniform quotient of a superreflexive Banach space $X$ is a linear quotient of some ultrapower of $X$.  This gives an isomorphic characterization of the Banach spaces which are uniform quotients of $L_p[0,1]$ ($1<p<\infty$) but left the gap open for $\ell_p$.

\indent

In \cite{JohnsonLindenstraussPreissSchechtman2002}, Johnson, Lindenstrauss, Preiss and Schechtman show among other things that $\ell_2$ is not a Lipschitz quotient of $\ell_p$ for $2<p<\infty$.  The argument is based on a differentiation technique and a comparison of the moduli of asymptotic uniform convexity and smoothness of $\ell_p$ and $\ell_2$ which does not extend to the case of $1<p<2$.  

\indent

In the present article, we show that $\ell_2$ cannot be a uniform quotient of $\ell_p$ for any $1<p<2$.  This shows that the only Banach spaces that are uniform quotients of $\ell_p$ for $1<p<2$ are isomorphically the linear quotients of $\ell_p$ $(1<p<2$) themselves.  Our result relies on a geometric property introduced by Rolewicz called property ($\beta$).  In Section \ref{beta} we present the ingredients about property $(\beta)$ that we need.  In Section \ref{unifcounif} we present lemmas about uniformly and co-uniformly continuous maps.  Although most of the material in these sections can be found elsewhere, we gather them here for completeness and also to set our notation.  The last section will be for our main result.

\section{Property $(\beta)$}\label{beta}

In  \cite{Rolewicz1987too}, Rolewicz introduces property $(\beta)$, which he has shown in \cite{Rolewicz1987} to be one generalization of uniform convexity in the sense that a uniformly convex Banach space must be $(\beta)$.  In \cite{Kutzarova1991}, Kutzarova gave the following characterization of property $(\beta)$, which we will adopt as the definition throughout the present article.

\begin{prop}
Let $X$ be a Banach space, and denote by $B_X$ its unit ball.  Then $X$ has the property $(\beta)$ of Rolewicz if and only if for every $\varepsilon>0$ there exists $\delta>0$ such that for every element $x \in B_X$ and every sequence $(x_n)_n \subseteq B_X$ with $\seqsep (\{x_n\})\geq \varepsilon$, there exists an index $i$ such that
$$\left \|\frac{x_i+x}{2}\right\|\leq 1-\delta.$$
\end{prop}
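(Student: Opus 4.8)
The plan is to derive Kutzarova's sequential midpoint condition from Rolewicz's original formulation of property $(\beta)$, and conversely. I would begin by recalling Rolewicz's definition in its modulus form: $X$ has property $(\beta)$ precisely when
\[
\beta_X(\varepsilon) = \inf\Bigl\{ 1 - \inf_n \bigl\| \tfrac{x + x_n}{2} \bigr\| : x \in S_X,\ (x_n) \subseteq B_X,\ \seqsep(\{x_n\}) \ge \varepsilon \Bigr\} > 0 \qquad \text{for every } \varepsilon > 0,
\]
where $\seqsep(\{x_n\}) = \inf_{m \neq n}\|x_m - x_n\|$ and $S_X$ is the unit sphere. The quantifier form ``$\forall \varepsilon\, \exists \delta$'' is then just a restatement of positivity of this modulus: any $\delta \in (0, \beta_X(\varepsilon))$ will do. Moreover, for a single admissible pair $(x,(x_n))$ the inequality $\inf_n\|\tfrac{x+x_n}{2}\| \le 1 - \beta_X(\varepsilon)$ guarantees a concrete index $i$ with $\|\tfrac{x_i+x}{2}\| \le 1-\delta$ as soon as $\delta < \beta_X(\varepsilon)$. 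So the only real discrepancy between the two statements is that the Proposition lets the central point $x$ range over all of $B_X$, while the modulus restricts it to $S_X$.

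To remove that discrepancy I would normalize, writing $\beta := \beta_X(\varepsilon)$ and splitting on the size of $\|x\|$. If $\|x\| \le 1 - \beta$, then for \emph{every} index $\|\tfrac{x+x_i}{2}\| \le \tfrac12(\|x\|+1) \le 1 - \tfrac{\beta}{2}$, so there is nothing to prove. If $\|x\| > 1-\beta$, put $\hat x = x/\|x\| \in S_X$ and apply the sphere condition to $\hat x$ together with the same $\varepsilon$-separated sequence; this produces an index $i$ with $\|\tfrac{\hat x + x_i}{2}\| \le 1 - \tfrac{3\beta}{4}$, and since $\|x - \hat x\| = 1 - \|x\| < \beta$, the triangle inequality gives
\[
\Bigl\|\tfrac{x+x_i}{2}\Bigr\| \le \Bigl\|\tfrac{\hat x + x_i}{2}\Bigr\| + \tfrac12\|x-\hat x\| \le 1 - \tfrac{3\beta}{4} + \tfrac{\beta}{2} = 1 - \tfrac{\beta}{4}.
\]
Thus $\delta := \beta_X(\varepsilon)/4$ works for every $x \in B_X$ and every $\varepsilon$-separated sequence, which is exactly the stated condition. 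The converse is trivial: restricting the Proposition's condition to $x \in S_X \subseteq B_X$ returns positivity of the modulus.

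The step to watch is \emph{uniformity} of $\delta$ in $x$ and in the sequence $(x_n)$: the $\delta$ extracted must depend on $\varepsilon$ alone. This is precisely what the infimum defining $\beta_X(\varepsilon)$ supplies on the sphere, and the normalization transfers it to the ball at the cost of only the additive loss $\tfrac12(1-\|x\|)$, which on the relevant region $\|x\| > 1-\beta$ is dominated by $\tfrac{\beta}{2}$. I do not expect any compactness or diagonal extraction to be needed, since the separated sequence is supplied in advance and the midpoint estimate is applied termwise; the whole argument is a careful bookkeeping of the normalization constant against $\beta_X(\varepsilon)$, with the correct threshold being $\beta_X(\varepsilon)$ itself rather than a function of $\varepsilon$.
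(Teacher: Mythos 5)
There is a genuine gap, and it sits at the very first line of your argument: what you call ``Rolewicz's original formulation'' is not Rolewicz's definition of property $(\beta)$. Rolewicz introduced property $(\beta)$ (in the paper \emph{On $\Delta$ uniform convexity and drop property} cited here as the source of the notion) in terms of drops and the Kuratowski measure of noncompactness: $X$ has property $(\beta)$ if for every $\varepsilon>0$ there exists $\delta>0$ such that whenever $1<\|x\|<1+\delta$, the set $D(x,B_X)\setminus B_X$ has Kuratowski measure of noncompactness less than $\varepsilon$, where $D(x,B_X)=\mathrm{conv}(\{x\}\cup B_X)$ is the drop generated by $x$. The ``modulus form'' you take as your starting point --- positivity of $\beta_X(\varepsilon)$, i.e.\ the sequential midpoint condition with the center restricted to $S_X$ --- is already a trivial variant of the statement to be proved. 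So your proof establishes only the equivalence of the sphere and ball versions of Kutzarova's condition; the actual content of the Proposition, namely the passage between the drop/measure-of-noncompactness definition and the sequential midpoint condition, is never touched. That passage is the substance of Kutzarova's theorem, and it is precisely why the present paper does not prove the Proposition at all but quotes it from Kutzarova's 1991 paper and adopts it as a working definition; there is no proof in the paper to compare yours against, and yours does not supply one either.

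For what it is worth, the normalization you did carry out is correct: the split into $\|x\|\le 1-\beta$ and $\|x\|>1-\beta$, the identity $\|x-\hat x\|=1-\|x\|<\beta$, the choice of an index $i$ with $\bigl\|\tfrac{\hat x+x_i}{2}\bigr\|\le 1-\tfrac{3\beta}{4}$ (legitimate, since the infimum over $n$ is at most $1-\beta$), and the resulting $\delta=\beta_X(\varepsilon)/4$ all check out, and the converse restriction from $B_X$ to $S_X$ is indeed trivial. But this is a minor lemma, not the characterization theorem. To close the gap you would have to argue from the drop definition itself: roughly, that failure of the sequential condition for some $\varepsilon$ produces, for every $\delta>0$, a point $x$ of norm slightly larger than $1$ together with an $\varepsilon$-separated sequence lying in $D(x,B_X)\setminus B_X$, so that the measure of noncompactness cannot be small; and, conversely, that a noncompact portion of a thin drop yields a center in $B_X$ and a separated sequence violating the midpoint condition. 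None of that appears in your proposal, and it cannot be recovered from the normalization argument alone.
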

Here the separation of the sequence is defined by $\seqsep(\{x_n\})=\inf \{\|x_n-x_m\|: n\neq m\}$.

\indent

Ayerbe, Dom\'inguez Benavides, and Cutillas define three moduli for the property $(\beta)$ in \cite{AyerbeDominguez_BenavidesCutillas1994}, and compute them for the space $\ell_p$ ($1<p<\infty$).  The following is what we will use.

\begin{prop}[see \cite{AyerbeDominguez_BenavidesCutillas1994}]
Let $X$ be a Banach space, and consider the following modulus $R''_X:[0,a] \to [0,1]$ where $a \in [1,2)$ depends on the space $X$:
$$R''_X(\varepsilon)=1-\sup\left\{\inf\left\{\frac{\|x+x_n\|}{2}, n\in \mathbb{N}\right\}: (x_n)_n \subseteq B_X, x\in B_X, \seqsep(\{x_n\})\geq \varepsilon\right\},$$
then for $1<p<\infty$, $R''_{\ell_p}:[0,2^{1/p}]\to[0,1]$ is given by
$$R''_{\ell_p}(\varepsilon)=1-\frac{1}{2}\left\{\frac{\varepsilon^p}{2}+\left(\left(1-\frac{\varepsilon^p}{2}\right)^{\frac{1}{p}}+1\right)^p\right\}^{\frac{1}{p}}.$$
\end{prop}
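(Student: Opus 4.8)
The plan is to determine the supremum
$$S(\varepsilon)=\sup\left\{\inf_n \tfrac{\|x+x_n\|}{2}:(x_n)_n\subseteq B_{\ell_p},\ x\in B_{\ell_p},\ \seqsep(\{x_n\})\geq\varepsilon\right\},$$
so that $R''_{\ell_p}(\varepsilon)=1-S(\varepsilon)$, and to show that $S(\varepsilon)=\frac12\{\tfrac{\varepsilon^p}{2}+(1+(1-\tfrac{\varepsilon^p}{2})^{1/p})^p\}^{1/p}$, which is exactly the claimed formula. First I would establish the lower bound by an explicit disjointly supported construction. Writing $s=(1-\tfrac{\varepsilon^p}{2})^{1/p}$ and $t=\varepsilon/2^{1/p}$ (both in $[0,1]$ since $0\le\varepsilon\le 2^{1/p}$), I put $x=e_1$, and $x_n=se_1+t\,e_{n+1}$. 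Then $\|x_n\|^p=s^p+t^p=1$, and for $n\neq m$ the $e_1$-components cancel, giving $\|x_n-x_m\|^p=2t^p=\varepsilon^p$, so $\seqsep(\{x_n\})=\varepsilon$. Moreover $x+x_n=(1+s)e_1+t\,e_{n+1}$ has constant norm $\|x+x_n\|^p=(1+s)^p+\tfrac{\varepsilon^p}{2}$, whence $\inf_n\tfrac{\|x+x_n\|}{2}$ equals the target value and $S(\varepsilon)$ is at least that.

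The reverse inequality is the substantive direction. Fix an admissible pair $x,(x_n)$. Since $\ell_p$ is reflexive and the sequence is bounded, I pass to a subsequence along which $x_n\rightharpoonup y$, and write $x_n=y+u_n$ with $u_n\rightharpoonup 0$; by a further extraction I arrange that $\|u_n\|\to\tau$ and $\|x+x_n\|\to L$ both exist. The infimum over the full sequence is bounded above by any limit point, so $\inf_n\|x+x_n\|\le L$. The key analytic input is the gliding hump / asymptotic $\ell_p$ structure: a weakly null bounded sequence in $\ell_p$ has, after passing to a subsequence, arbitrarily small norm-perturbations that are disjointly supported (and disjoint from any prescribed finitely supported vector). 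Consequently, for each fixed $w$ one gets $\|w+u_n\|^p\to\|w\|^p+\tau^p$, and $\|u_n-u_m\|^p\to 2\tau^p$ as $n,m\to\infty$.

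Applying this with $w=x+y$ yields $L^p=\|x+y\|^p+\tau^p$; with $w=y$ it gives $\|x_n\|^p\to\|y\|^p+\tau^p$, and since each $\|x_n\|\le 1$ we obtain $\|y\|^p+\tau^p\le 1$; and the pairwise statement, together with $\|u_n-u_m\|=\|x_n-x_m\|\ge\varepsilon$, forces $2\tau^p\ge\varepsilon^p$. Setting $b=\|y\|$ and using $\|x+y\|\le\|x\|+\|y\|\le 1+b$, I reduce to maximizing $(1+b)^p+\tau^p$ subject to $b^p+\tau^p\le 1$ and $\tau^p\ge\tfrac{\varepsilon^p}{2}$; the last two give $b^p\le 1-\tfrac{\varepsilon^p}{2}$. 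For fixed $b$ the bound increases in $\tau$, so I take $\tau^p=1-b^p$, reducing to $h(b)=(1+b)^p+1-b^p$ on $0\le b^p\le 1-\tfrac{\varepsilon^p}{2}$. Since $h'(b)=p[(1+b)^{p-1}-b^{p-1}]>0$ for $p>1$, the maximum is at the right endpoint $b^p=1-\tfrac{\varepsilon^p}{2}$, giving $L^p\le\tfrac{\varepsilon^p}{2}+(1+(1-\tfrac{\varepsilon^p}{2})^{1/p})^p$. Taking the supremum over all admissible configurations shows $S(\varepsilon)$ is at most the target, completing the computation.

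I expect the main obstacle to be the gliding hump step and its consistent use: one must extract a single subsequence handling weak convergence, the limits $\tau$ and $L$, the disjointification against the finitely supported vectors $x+y$ and $y$, and the pairwise norms simultaneously, and then track carefully that passing to subsequences only \emph{raises} the bound on $\inf_n\|x+x_n\|$, so that the estimate survives for the original sequence. The subsequent two-variable optimization is routine calculus once these limiting identities are in place.
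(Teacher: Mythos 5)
Your proposal is correct, but there is nothing in the paper to compare it against: the paper states this proposition purely as a citation to Ayerbe, Dom\'inguez Benavides and Cutillas and gives no proof, using it only as an imported computation whose sole role is to feed the estimate $(2-\delta)^p < \frac{\varepsilon^p}{2}+\bigl(\bigl(1-\frac{\varepsilon^p}{2}\bigr)^{1/p}+1\bigr)^p$ into Lemma \ref{geometric_lemma}. Judged on its own, your argument is sound and follows the natural route. The lower bound via the disjointly supported ``fan'' $x=e_1$, $x_n=\bigl(1-\frac{\varepsilon^p}{2}\bigr)^{1/p}e_1+\frac{\varepsilon}{2^{1/p}}e_{n+1}$ attains the stated value exactly, with separation exactly $\varepsilon$. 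For the upper bound, the chain weak compactness, then almost-disjointness of a weakly null subsequence (Bessaga--Pe\l czy\'nski gliding hump), then the limit identities $\|w+u_n\|^p\to\|w\|^p+\tau^p$ and $\|u_n-u_m\|^p\to 2\tau^p$, is standard and correctly invoked; the constraints $\|y\|^p+\tau^p\le 1$ and $2\tau^p\ge\varepsilon^p$ together with the monotonicity of $h(b)=(1+b)^p+1-b^p$ (valid precisely because $p>1$) yield exactly the claimed bound, and you correctly note that passing to subsequences only increases the infimum, so the estimate transfers back to the original sequence. It is worth observing that the upper-bound direction ($\sup\le$ formula) is the only part the paper's Lemma \ref{geometric_lemma} actually consumes, while your explicit construction additionally shows the formula is attained, i.e.\ that the modulus computation is sharp. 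The one point you leave untouched is the assertion about the domain, namely that $a=2^{1/p}$ for $\ell_p$; that is a separate statement about the maximal separation of infinite sequences in $B_{\ell_p}$, independent of the formula you verify, and is not needed anywhere in the paper.
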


\indent

This allows us to make the following computation.

\begin{lem}\label{geometric_lemma}
  Let $1 < p < \infty$. Let $0<\varepsilon \leq 2^{1/p}$ and $0<\delta<2$.
  Assume that $z \in B_{\ell_p}$ and $(z_n) \subset B_{\ell_p}$
  with $\|z - z_n\| > 2 - \delta$ for all $n \in \mathbb{N}$
  and $\seqsep(\{z_n\}) \ge \varepsilon$.  Then $\varepsilon \le C_p\,\delta^{1/p}$ for some positive
  constant $C_p$ depending only on $p$.
\end{lem}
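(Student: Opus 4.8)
The plan is to reduce the statement to the explicit formula for $R''_{\ell_p}$ given in the preceding proposition, and then to extract from that formula a lower bound of the form $R''_{\ell_p}(\varepsilon) \ge c_p\,\varepsilon^p$.

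First I would rewrite the hypotheses in terms of the modulus $R''_{\ell_p}$. Setting $x = z$ and $x_n = -z_n$, the sequence $(x_n)$ still lies in $B_{\ell_p}$, still satisfies $\seqsep(\{x_n\}) = \seqsep(\{z_n\}) \ge \varepsilon$, and now has $\|x + x_n\| = \|z - z_n\| > 2 - \delta$ for every $n$. Hence $\inf_n \frac{1}{2}\|x + x_n\| \ge 1 - \frac{\delta}{2}$, so this single admissible configuration already forces the supremum in the definition of $R''_{\ell_p}(\varepsilon)$ to be at least $1 - \frac{\delta}{2}$, and therefore $R''_{\ell_p}(\varepsilon) \le \frac{\delta}{2}$. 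The hypothesis $\varepsilon \le 2^{1/p}$ ensures that $\varepsilon$ lies in the domain $[0, 2^{1/p}]$ on which the formula for $R''_{\ell_p}$ is valid.

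The core of the argument is to show that this explicit expression decays at least linearly in $\varepsilon^p$. I would substitute $t = \varepsilon^p/2$, which ranges over $(0,1]$ as $\varepsilon$ ranges over $(0, 2^{1/p}]$, and put $f(t) = 1 - \frac{1}{2}\{t + ((1-t)^{1/p}+1)^p\}^{1/p}$, so that $R''_{\ell_p}(\varepsilon) = f(t)$ and $f(0) = 0$. Writing $g(t) = t + ((1-t)^{1/p}+1)^p$, a routine differentiation gives $g'(t) = 1 - (1 + (1-t)^{-1/p})^{p-1}$, which is strictly negative on $(0,1)$ because $p > 1$; thus $g$ is strictly decreasing, so $g(t) < g(0) = 2^p$ and hence $f(t) > 0$ for every $t \in (0,1]$. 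A further computation yields $f'(0) = \frac{1}{2p}(1 - 2^{1-p}) > 0$.

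Finally I would convert this into the desired linear lower bound by a compactness argument. The quotient $f(t)/t$ is continuous and strictly positive on $(0,1]$, and since $f(0) = 0$ with $f'(0) > 0$ it extends continuously to $[0,1]$ with value $f'(0)$ at the origin; hence it is bounded below by some constant $c_p > 0$ on all of $[0,1]$. Combining $c_p\,t \le f(t) = R''_{\ell_p}(\varepsilon) \le \frac{\delta}{2}$ with $t = \varepsilon^p/2$ gives $\varepsilon^p \le \frac{\delta}{c_p}$, that is $\varepsilon \le C_p\,\delta^{1/p}$ with $C_p = c_p^{-1/p}$. The only genuinely delicate points are verifying the sign of $g'$ (equivalently, that $f$ remains positive away from the origin) together with the positivity of $f'(0)$; the rest is bookkeeping with the modulus.
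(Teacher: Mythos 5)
Your proof is correct, and its opening move coincides exactly with the paper's: feeding the configuration $x = z$, $x_n = -z_n$ (the paper uses $x=-z$, $x_n = z_n$, the same thing by symmetry of the ball) into the modulus of Ayerbe, Dom\'inguez Benavides and Cutillas to conclude $R''_{\ell_p}(\varepsilon) \le \delta/2$. The two arguments part ways in how they extract $\varepsilon^p \le C\delta$ from the explicit formula. The paper runs a chain of elementary pointwise inequalities --- $(1-t)^{1/p} < 1 - \frac{t}{p}$, the convexity estimate $(1-t)^p < 1 - \frac{2(2^p-1)}{2^p}t$ for $0<t<\frac{1}{2}$, and $1 - \frac{p\delta}{2} < \left(1-\frac{\delta}{2}\right)^p$ --- ending in the fully explicit bound $\left(\frac{2^p-1}{p2^{p+1}}-\frac{1}{2^{p+1}}\right)\varepsilon^p < \frac{p}{2}\delta$, with the single remaining check that $2^p > 1+p$. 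You instead substitute $t = \varepsilon^p/2$, show that $g(t) = t + ((1-t)^{1/p}+1)^p$ is strictly decreasing (so that $f(t) = 1 - \frac{1}{2}g(t)^{1/p}$ stays positive on $(0,1]$), compute $f'(0) = \frac{1}{2p}(1-2^{1-p}) > 0$, and obtain the linear lower bound $f(t) \ge c_p t$ by compactness of $[0,1]$. Your calculus is right: $g'(t) = 1 - \left(1+(1-t)^{-1/p}\right)^{p-1} < 0$ for $p>1$, and the value of $f'(0)$ checks out, so the argument is complete. The trade-off is this: the paper's route uses nothing beyond convexity and produces an explicit constant $C_p$ that could be tracked quantitatively through the main theorem; your route has less bookkeeping and isolates the conceptual content --- the modulus $R''_{\ell_p}$ has power type $p$ at the origin, with computable derivative --- but the compactness step surrenders any explicit value of $c_p$. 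Since the lemma is only used qualitatively in the main theorem (one lets $\varepsilon \to 0$ and needs only that some constant exists), either version serves the paper's purpose.
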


\begin{proof}
  We have
  \begin{multline*}
    2 - 2R''_{\ell_p}(\varepsilon) =
    \sup\left\{\inf\left\{
        \|x_n + x\| : n \in \mathbb{N} \right\}
      : (x_n)_n \subset B_{\ell_p}, x \in B_{\ell_p}, \seqsep(\{x_n\}) \ge
      \varepsilon \right\} \\
    \ge \inf\{\|z_n + (-z)\| : n \in \mathbb{N}\}
    = \inf\{\|z - z_n\| : n \in \mathbb{N}\}
    > 2 - \delta.
  \end{multline*}

   So we have
  \begin{equation*}
  \begin{split}
  (2-\delta)^p&< \left(2 - 2R''_{\ell_p}(\varepsilon)\right)^p\\
  &= \frac{\varepsilon^p}{2}+\left(\left(1-\frac{\varepsilon^p}{2}\right)^{1/p}+1\right)^p\\
  &\leq \frac{\varepsilon^p}{2}+\left(\left(1-\frac{\varepsilon^p}{2p}\right)+1\right)^p\\
  &= \frac{\varepsilon^p}{2}+2^p\left(1-\frac{\varepsilon^p}{4p}\right)^p\\
  \end{split}
  \end{equation*}
  This is because $\displaystyle (1-t)^{1/p}<1-\frac{t}{p}$ for all
  $0<t\leq 1$.  Next notice that $\displaystyle
  \frac{\varepsilon^p}{4p}\leq \frac{1}{2p}<\frac{1}{2}$, and that for
  $\displaystyle 0<t<\frac{1}{2}$ we have
 $$(1-t)^p<1-\frac{2(2^p-1)}{2^p}t$$ 
 by convexity of the function $\varphi(t)=(1-t)^p$.  We then have
 \begin{equation*}
  \begin{split}
  (2-\delta)^p&< \frac{\varepsilon^p}{2}+2^p\left(1-\frac{2(2^p-1)\varepsilon^p}{4p2^p}\right)\\
  &=2^p\left(1-\frac{(2^p-1)}{p2^{p+1}}\varepsilon^p+\frac{1}{2^{p+1}}\varepsilon^p\right)\\
  \end{split}
  \end{equation*} 
  so that
  $$\left(1-\frac{\delta}{2}\right)^p<1-\frac{(2^p-1)}{p2^{p+1}}\varepsilon^p+\frac{1}{2^{p+1}}\varepsilon^p.$$
  
  Next, we note that
  $$1-p\frac{\delta}{2}<\left(1-\frac{\delta}{2}\right)^p,$$
  giving us
  $$\left(\frac{(2^p-1)}{p2^{p+1}}-\frac{1}{2^{p+1}}\right)\varepsilon^p <\frac{p}{2}\delta.$$ 
   
   Finally, we check that $\displaystyle \left(\frac{(2^p-1)}{p2^{p+1}}-\frac{1}{2^{p+1}}\right)>0$, which comes down to checking that $2^p>1+p$, which is true for any $p>1$.      
    
\end{proof}

Geometrically the above lemma tells us that
if we have a ``fork'' or a ``chinese fan'' in the unit ball, with
the line segment $[z,0]$ as the handle
and the points $z_n$ as the tips, then
the tips cannot be separated too much if the
length of the fork is almost the diameter of the unit ball.

\section{Uniformly and co-uniformly continuous maps}\label{unifcounif}

Let $T:X\to Y$ be a map between two metric spaces $X$ and $Y$.  Denote by $\Omega$ the modulus of continuity of $T$, namely
\begin{equation}\label{modofcont}
\Omega(t):=\sup ~\{d(Tx,Tx'), d(x,x') \leq t \}.
\end{equation}
Then $\Omega$ is nondecreasing, and $T$ is uniformly continuous if and only if $~\Omega(t) \to 0$ as $t \to 0$.  

\indent

The following lemma is classical, see for example \cite[Proposition 1.11]{BenyaminiLindenstrauss2000}.	
\begin{lem}[``Lipschitz for large distances'' principle]\label{lip4lrgdist}
Assume that the metric space $X$ is a Banach space, (or in general that $X$ is metrically convex).  Then for any $M\geq 0$ and any $d> 0$, we have
$$\Omega(M) \leq \max \left \{\Omega(d), 2\frac{\Omega(d)}{d}M \right \}.$$
As a result of this and the fact that $\Omega(t) \to 0$ as $t\to 0$, a uniformly continuous $T$ will satisfy $\Omega(M) <+\infty$ for any $M\geq 0$.
\end{lem}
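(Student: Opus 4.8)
The plan is to prove the inequality pointwise: I will show that for any two points $x, x' \in X$ with $d(x,x') \le M$ the distance $d(Tx, Tx')$ is bounded by the right-hand side, and then take the supremum to recover the bound on $\Omega(M)$. The engine is a chaining argument that replaces one long jump from $x$ to $x'$ by a chain of short jumps, each of length at most $d$ and hence each controlled by $\Omega(d)$.

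First I would fix $x, x'$ and set $s := d(x,x') \le M$. Using that $X$ is metrically convex (in the Banach space case, simply interpolating along the segment $[x,x']$ via $y_i = x + \tfrac{i}{n}(x'-x)$), I would choose an integer $n$ and intermediate points $x = y_0, y_1, \dots, y_n = x'$ with $d(y_{i-1}, y_i) = s/n$ for every $i$. Taking $n = \lceil s/d \rceil$ guarantees $s/n \le d$, so by the triangle inequality and monotonicity of $\Omega$,
\begin{equation*}
d(Tx, Tx') \le \sum_{i=1}^{n} d(Ty_{i-1}, Ty_i) \le n\,\Omega(s/n) \le n\,\Omega(d).
\end{equation*}

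The remaining step is to estimate $n$ in two regimes. If $s \le d$ then $n = 1$ and the bound reads $d(Tx,Tx') \le \Omega(d)$. If $s > d$ then $s/d > 1$, so $n = \lceil s/d\rceil \le s/d + 1 < 2s/d \le 2M/d$, whence $d(Tx,Tx') < 2\frac{\Omega(d)}{d}M$. In either case the value is dominated by $\max\{\Omega(d), 2\frac{\Omega(d)}{d}M\}$; taking the supremum over all pairs with $d(x,x')\le M$ yields the claimed inequality. For the final assertion, uniform continuity gives $\Omega(d) \to 0$ as $d \to 0$, so some $d > 0$ satisfies $\Omega(d) < \infty$; feeding this $d$ into the inequality shows $\Omega(M) < \infty$ for every $M \ge 0$.

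I do not anticipate a serious obstacle. The only points needing care are the use of metric convexity to produce an exact equipartition of the path (ensuring every link has length at most $d$, which is what lets monotonicity of $\Omega$ bite) and the elementary estimate $\lceil s/d\rceil < 2s/d$ valid precisely when $s > d$ — this is exactly the source of the factor $2$ appearing in the statement.
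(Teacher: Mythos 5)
Your proposal is correct and follows essentially the same chaining argument as the paper: both use metric convexity to split the segment $[x,x']$ into links of length at most $d$, bound each link by $\Omega(d)$, and count the links by $\lceil s/d \rceil \le s/d + 1 \le 2s/d$ when $s > d$ (the paper uses $n$ pieces of length exactly $d$ plus a remainder piece, while you use an exact equipartition, a negligible difference). The case split, the source of the factor $2$, and the finiteness conclusion all match the paper's proof.
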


There is also a ``co-Lipschitz for large distances'' principle.  The proof can be found in \cite[Lemma 11.11]{BenyaminiLindenstrauss2000} for example, but is included here for completeness.

\begin{lem}[``co-Lipschitz for large distances'' principle]\label{colip4lrgdist}
Let $T:X \to Y$ be co-uniformly continuous and assume that $Y$ is a Banach space (or more generally that $Y$ is metrically convex).  Then for every $d>0$, there exists $c>0$ such that
\begin{equation}\label{c(d)}
r\geq d ~\Rightarrow~ \forall x\in X, ~~B(Tx, cr) \subseteq T\left(B(x,r)\right).
\end{equation}
\end{lem}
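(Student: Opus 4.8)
The plan is to exploit the definition of co-uniform continuity at a single scale and then \emph{chain} the resulting small-ball inclusions along a metric geodesic in $Y$. First I would fix $d>0$ and apply co-uniform continuity with $\varepsilon=d$: this produces a $\delta>0$ such that $B(Tz,\delta)\subseteq T(B(z,d))$ for every $z\in X$. I would then set $c=\delta/(2d)$ and claim that this $c$ witnesses \eqref{c(d)}.

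To verify the inclusion, fix $r\ge d$, a point $x\in X$, and a target $y\in B(Tx,cr)$; write $\rho=d(Tx,y)\le cr$ and let $N=\lceil \rho/\delta\rceil$. Using the metric convexity of $Y$, I would pick points $y_0=Tx,y_1,\dots,y_N=y$ spaced evenly along a metric geodesic from $Tx$ to $y$, so that $d(y_{j},y_{j+1})=\rho/N\le\delta$ for each $j$ (the inequality holds since $N\ge\rho/\delta$). Starting from $x_0=x$ and arguing by induction, whenever $Tx_j=y_j$ the estimate $d(Tx_j,y_{j+1})\le\delta$ places $y_{j+1}$ in $B(Tx_j,\delta)\subseteq T(B(x_j,d))$, so I may choose $x_{j+1}\in B(x_j,d)$ with $Tx_{j+1}=y_{j+1}$. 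After $N$ steps this yields a point $x_N$ with $Tx_N=y$ and $d(x,x_N)\le Nd$ by the triangle inequality (the case $\rho=0$ being trivial with $N=0$).

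It remains to control the number of steps, and this is the delicate point of the argument. I would show $Nd\le r$, i.e.\ $N\le r/d$. Since $\rho\le cr=\delta r/(2d)$ we have $N=\lceil \rho/\delta\rceil\le \lceil r/(2d)\rceil$, and the elementary inequality $\lceil u/2\rceil\le u$, valid for all $u\ge 1$, applied with $u=r/d$ (which is $\ge 1$ precisely because $r\ge d$) gives $N\le r/d$. Hence $x_N\in B(x,r)$ and $y\in T(B(x,r))$, as required. The main obstacle is exactly this integrality gap: a naive choice such as $c=\delta/d$ forces an extra step, hence an extra displacement of $d$ in $X$ coming from the ceiling, which for small $r$ is not absorbed by $r$. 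Halving $c$ and invoking the hypothesis $r\ge d$ is precisely what closes this gap.
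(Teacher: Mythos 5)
Your proof is correct and takes essentially the same route as the paper's: both chain the single-scale co-uniform continuity inclusion along an evenly subdivided segment in $Y$, lifting step by step in $X$, and both use a factor of $2$ of slack to absorb the integrality of the step count. The only difference is bookkeeping — the paper puts the factor in the $X$-scale (using $\varepsilon = d/2$ and $c = \delta(d/2)/d$), while you put it in the constant (using $\varepsilon = d$ and $c = \delta/(2d)$).
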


	\begin{proof}
	Let $d>0$, and let $\delta=\delta\left (\frac{d}{2} \right)$ be the $\delta$ given by $\varepsilon=\frac{d}{2}$ from the definition of co-uniform continuity, so that we have for every $x$ in $X$,
	\begin{equation}\label{delta}
	B(Tx,\delta) \subseteq T\left(B\left(x,\frac{d}{2}\right)\right).
	\end{equation}
	
	Let $r\geq d$ and write $r=n\frac{d}{2}+d'$ with $n \in \mathbb{N}$ and $0<d'\leq \frac{d}{2}$.  
	
	Let $x\in X$ and $y\in Y$ be such that $\|y-Tx\| \leq n\delta$.  Let us divide the segment $[Tx,y]$ into $n$ segments of equal length $\frac{\|y-Tx\|}{n} \leq \delta$:
	$$Tx=y_0,~y_1, ~y_2, ~\cdots, ~y_n=y.$$
	Then from using (\ref{delta}) inductively we get points $x_1, ~x_2, ~\cdots, ~x_n \in X$ such that
	\begin{equation*}
	\begin{cases}
	Tx_i=y_i \text{ for all } 1\leq i\leq n\\
	\\
	\|x_1-x\|\leq \frac{d}{2},\\
	\\
	\|x_i-x_{i-1}\| \leq \frac{d}{2} \text{ for all } 2\leq i \leq n.\\
	\end{cases}
	\end{equation*}
	
	As a result, the point $x':=x_n$ satisfies $\|x'-x\| \leq n\frac{d}{2} \leq r$ and $Tx'=y$.
	
	Now, from the definition of $n$ and $d'$, we have that
	$$\frac{r}{d}=\frac{n\frac{d}{2}+d'}{d}\leq \frac{n}{2}+\frac{1}{2}\leq n.$$	
	
	This, together with the above argument, shows that if $\|y-Tx\| \leq \frac{r}{d}\delta$ then we get $\|y-Tx\| \leq n\delta$ so that $y=Tx'$ for some $x'\in B(x,r)$.  This means that we can take
	\begin{equation}\label{lowerbound}
	c= \frac{\delta\left(\frac{d}{2}\right)}{d}.
	\end{equation}
	\end{proof}
	
\indent
	
The next lemma looks at the interaction of the facts that both $X$ and $Y$ are Banach spaces, and $T:X\to Y$ is both uniformly and co-uniformly continuous.
	
\begin{lem}
Let $X$ and $Y$ be Banach spaces, and let $T:X \to Y$ be both uniformly and co-uniformly continuous.  For $d>0$, denote $c_d$ the supremum of all $c>0$ such that
\begin{equation}\label{c(d)attained}
\left(r \geq d,~x\in X,~\|y-Tx\|<cr\right)~\Rightarrow ~\left(y=Tx' \text{ for some }x'\in B(x,r)\right).
\end{equation}
Then, 
	\begin{enumerate}
	\item[(i)] $c_d$ is attained,
	\item[(ii)] $0<c_d<+\infty$ for all $d>0$,
	\item[(iii)] $\{c_d\}_{d>0}$ is nondecreasing, and the limit $\displaystyle C=\lim_{d\to \infty} c_d$ is finite.
	\end{enumerate}

\end{lem}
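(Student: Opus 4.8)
The plan is to fix $d > 0$ and study the set $S_d := \{c > 0 : c \text{ satisfies } (\ref{c(d)attained})\}$, reading off all three assertions from its structure; throughout I assume, as I may, that $Y \ne \{0\}$ (otherwise $T$ is constant and $c_d = +\infty$). The first thing I would record is that $S_d$ is downward closed: if $c \in S_d$ and $0 < c' < c$, then $\|y - Tx\| < c'r$ forces $\|y - Tx\| < cr$, so the very same $x'$ witnesses the conclusion and $c' \in S_d$. Hence $S_d$ is an interval with right endpoint $c_d = \sup S_d$, and the only question for (i) is whether that endpoint lies in $S_d$.

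For (ii), I would get the lower bound $c_d > 0$ directly from Lemma \ref{colip4lrgdist}, which supplies a constant $c > 0$ with $B(Tx, cr) \subseteq T(B(x,r))$ for all $r \ge d$ and all $x$; since the balls are closed this gives $\|y - Tx\| \le cr \Rightarrow y = Tx'$ with $x' \in B(x,r)$, which is even stronger than (\ref{c(d)attained}), so $c \in S_d$ and $c_d \ge c > 0$. For the upper bound I would argue by contradiction: if some $c \in S_d$ had $cd > \Omega(d)$, then taking $r = d$ and choosing $y$ with $\Omega(d) < \|y - Tx\| < cd$ (possible since $Y \ne \{0\}$) would yield $y = Tx'$ for some $x' \in B(x,d)$, whence $\|y - Tx\| = \|Tx' - Tx\| \le \Omega(d)$, a contradiction. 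Thus every $c \in S_d$ obeys $c \le \Omega(d)/d$, so $c_d \le \Omega(d)/d$, which is finite by Lemma \ref{lip4lrgdist}.

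Assertion (i) is where the strict inequality in (\ref{c(d)attained}) is essential. Given $r \ge d$, $x \in X$ and $y$ with $\|y - Tx\| < c_d r$, I would set $\rho := \|y - Tx\|/r < c_d$; since $c_d = \sup S_d$ there is some $c \in S_d$ with $\rho < c$, and then $\|y - Tx\| < cr$ together with $c \in S_d$ produces the required $x' \in B(x,r)$. Hence $c_d \in S_d$, i.e.\ the supremum is attained. For the monotonicity in (iii), I would note that enlarging $d$ only shrinks the range of $r$ that must be handled: if $d_1 \le d_2$ then $\{r : r \ge d_2\} \subseteq \{r : r \ge d_1\}$, so every $c$ working for $d_1$ also works for $d_2$, giving $S_{d_1} \subseteq S_{d_2}$ and $c_{d_1} \le c_{d_2}$.

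It then remains to see the nondecreasing family $(c_d)$ has a finite limit. The limit $C = \lim_{d\to\infty} c_d = \sup_{d>0} c_d$ exists in $(0,+\infty]$, and to bound it I would combine $c_d \le \Omega(d)/d$ with the growth estimate of Lemma \ref{lip4lrgdist}: fixing any $d_0 > 0$ gives $\Omega(d)/d \le \max\{\Omega(d_0)/d,\, 2\Omega(d_0)/d_0\}$, so that $\limsup_{d\to\infty}\Omega(d)/d \le 2\Omega(d_0)/d_0 < \infty$. Since $C = \limsup_{d\to\infty} c_d \le \limsup_{d\to\infty}\Omega(d)/d$, this forces $C < \infty$. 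The one genuinely delicate point, and the step I expect to be the main obstacle, is attainment in (i): a supremum need not in general be reached, and it is precisely the strict inequality in (\ref{c(d)attained}) combined with the downward-closedness of $S_d$ that makes $c_d$ itself belong to $S_d$.
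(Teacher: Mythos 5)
Your proposal is correct and follows essentially the same route as the paper: the lower bound on $c_d$ comes from Lemma \ref{colip4lrgdist} (the constant in equation (\ref{lowerbound})), the upper bound from comparing $T(B(x,r))$ with $B(Tx,\Omega(r))$ and invoking Lemma \ref{lip4lrgdist} for finiteness, attainment from the strict inequality in (\ref{c(d)attained}), and monotonicity from the shrinking range of admissible $r$. You simply spell out in full detail (including the degenerate case $Y=\{0\}$ and the downward closedness of the set of admissible $c$) what the paper's proof states tersely.
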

	\begin{proof}
	\mbox{ }
	\begin{enumerate}
	\item[(i)] The fact that $c_d$ is attained comes from the one strict inequality in equation (\ref{c(d)attained}).
	\item[(ii)] For any $\varepsilon>0$, any $r \ge d$ and any $x\in X$,
	$$B(Tx,(c_d-\varepsilon)r) \subseteq T(B(x,r)) \subseteq B(Tx,\Omega(r)).$$
	In particular for $r = d$, this and equation (\ref{lowerbound}) gives us that
	$$\frac{\delta\left(\frac{d}{2}\right)}{d}\leq c_d \leq \frac{\Omega(d)}{d}.$$
	\item[(iii)] It's trivial that $\{c_d\}_{d>0}$ is nondecreasing.  As in (ii) we also get $c_d\cdot r \leq \Omega(r)$ for $r\geq d$, and from Lemma \ref{lip4lrgdist} we get
	$$c_d \leq \frac{\Omega(r)}{r}
	  \leq \max \left\{ \frac{\Omega(1)}{r},2\Omega(1)\right\}\leq 2\Omega(1)<+\infty.$$	
	\end{enumerate}
	\end{proof}

\section{Main result}

\begin{thm}\label{main}
$\ell_q$ cannot be a uniform quotient of $\ell_p$ for $1<p<q<\infty$.
\end{thm}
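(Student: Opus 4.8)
The plan is to assume, for contradiction, that there is a surjective uniform quotient map $T \colon \ell_p \to \ell_q$ with $1 < p < q < \infty$. The central strategy is to exploit the fact that property $(\beta)$ is quantitatively \emph{stronger} in $\ell_p$ than in $\ell_q$ when $p < q$: Lemma \ref{geometric_lemma} tells us that a ``chinese fan'' in $B_{\ell_p}$ whose handle nearly realizes the diameter of the ball must have tips separated by at most $C_p\,\delta^{1/p}$, whereas the analogous estimate in $\ell_q$ only forces separation at most $C_q\,\delta^{1/q}$. Since $1/p > 1/q$, the constraint in the domain is far more rigid for small $\delta$, and this mismatch in the exponents is what I would drive toward a contradiction.

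Concretely, I would first normalize the map using the ``for large distances'' machinery of Section \ref{unifcounif}. By Lemma \ref{lip4lrgdist} the map $T$ is Lipschitz for large distances with modulus controlled by $\Omega$, and by Lemma \ref{colip4lrgdist} together with the subsequent lemma, $T$ is co-Lipschitz for large distances with co-Lipschitz constant $c_d$ increasing to a finite limit $C$. The idea is to pass to a large scale where $T$ behaves essentially like a linear quotient map: on balls of radius $r \geq d$ we have $B(Tx, c_d r) \subseteq T(B(x,r)) \subseteq B(Tx, \Omega(r))$, with $\Omega(r)/r$ and $c_d$ both comparable to constants. Then I would take an $\varepsilon$-separated sequence $(w_n)$ in $B_{\ell_q}$ whose mutual distances are close to the diameter $2$ — the standard choice being a sequence behaving like $w_n = y_0 - 2e_n$ (scaled appropriately) so that $\|w_n - w_m\|_q$ and $\|w_n - y_0\|_q$ are both near maximal, which is possible precisely because in $\ell_q$ a separated sequence can sit near the sphere at nearly diameter distance from a fixed point.

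The heart of the argument is a lifting step: using co-uniform continuity I would lift the fan $\{w_n\}$ with apex $Tx = y_0$ back through $T$ to obtain points $z_n \in \ell_p$ with $Tz_n = w_n$ lying in a controlled ball $B(x, r)$, so that after recentering and rescaling we obtain a configuration $z, (z_n)$ in $B_{\ell_p}$. Uniform continuity of $T$ forces the preimages $z_n$ to be separated (since their images are $\varepsilon$-separated, and $\Omega$ controls how much $T$ can contract), giving a lower bound $\seqsep(\{z_n\}) \geq \varepsilon'$; simultaneously, the co-Lipschitz control keeps the handle $[z, 0]$ nearly as long as the diameter, i.e. $\|z - z_n\| > 2 - \delta$ with $\delta$ small. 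Feeding this into Lemma \ref{geometric_lemma} forces $\varepsilon' \leq C_p\,\delta^{1/p}$. The contradiction comes from comparing this with the freedom we had in $\ell_q$: by letting the scale grow I can drive $\delta \to 0$ while keeping the image separation $\varepsilon$ bounded below, and because the lifting only costs constant factors (from $\Omega$ and $c_d$, which stabilize at large scales), the domain estimate $\varepsilon' \lesssim \delta^{1/p}$ becomes incompatible with the persistence of separation dictated by the $\ell_q$ geometry.

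The main obstacle I anticipate is the bookkeeping in the rescaling and recentering so that the lifted configuration genuinely lands in the \emph{unit} ball of $\ell_p$ with the handle length controlled from below by $2 - \delta$ simultaneously with a separation controlled from below — these two requirements pull in opposite directions, since making the handle long pushes points toward the sphere while keeping them separated requires room. Managing the interplay between the quotient modulus $\Omega$, the co-Lipschitz constant $c_d$, and the two different exponents $1/p$ and $1/q$ so that the constants do not degenerate as the scale tends to infinity is the delicate part; the cleanest route is probably to fix the target separation $\varepsilon$ in $\ell_q$ once and for all, track how small $\delta$ must be as a function of the scale, and verify that the asymptotic stabilization of $c_d$ to the finite limit $C$ prevents the lower bound on $\seqsep(\{z_n\})$ from collapsing.
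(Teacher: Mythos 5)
Your high-level intuition matches the paper's: the exponent mismatch between the property-$(\beta)$ moduli of $\ell_p$ and $\ell_q$ ($\delta^{1/p}$ versus $\delta^{1/q}$), the large-distance Lipschitz/co-Lipschitz machinery, and the stabilization of $c_d$ at its finite limit $C$ are all the right ingredients. But there is a genuine gap at the heart of your lifting step, namely the claim that ``the co-Lipschitz control keeps the handle $[z,0]$ nearly as long as the diameter, i.e.\ $\|z-z_n\|>2-\delta$ with $\delta$ small.'' Co-Lipschitz control is an inclusion $B(Tx,c_d r)\subseteq T(B(x,r))$: it only ever produces preimages \emph{close} to a given point; it can never force a preimage to be \emph{far} from anything. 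When you lift your near-extremal fan $\{w_n\}\subset\ell_q$, the only lower bound on the domain handle comes from uniform continuity, $\|w_n-y_0\|\leq\Omega(\|z_n-x\|)$, which by Lemma \ref{lip4lrgdist} gives roughly $\|z_n-x\|\gtrsim\|w_n-y_0\|/(2\Omega(1))$; meanwhile the smallest ball around $x$ guaranteed to contain the lifts $z_n$ has radius about $\|w_n-y_0\|/c_d$. So the handle-to-radius ratio you can actually guarantee is at most about $c_d/(2\Omega(1))\leq 1$ (the paper's lemma in Section \ref{unifcounif} shows $c_d\leq 2\Omega(1)$), i.e.\ bounded away from $2$ by a fixed multiplicative loss that does not improve as the scale grows. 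Consequently you can only invoke Lemma \ref{geometric_lemma} with $\delta$ of order $1$, where the conclusion $\seqsep(\{z_n\})\leq C_p\,\delta^{1/p}$ is vacuous, and no contradiction materializes.

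The missing idea --- the crux of the paper's proof --- is that the long handle must come from the \emph{failure} of co-Lipschitzness at a constant slightly above $C$, not from co-Lipschitzness itself. The paper chooses $d_0$ with $C-\varepsilon<c_{d_0/3}\leq C$ and uses the definition of $c_{d_0}$ as a supremum to produce a witness: points $z_\varepsilon$, $y_\varepsilon$ and a radius $R\geq d_0$ with $\|y_\varepsilon-Tz_\varepsilon\|<(C+\varepsilon)R$ but such that \emph{no} preimage of $y_\varepsilon$ lies in $B(z_\varepsilon,R)$. The segment $[Tz_\varepsilon,y_\varepsilon]$ is cut into thirds; the one-third point is lifted to a point $z$ (normalized to be the origin) with $\|z_\varepsilon-z\|\leq\rho_\varepsilon\approx R/3$; the separated perturbations $y_n=\varepsilon^{1/q}\frac{D}{3}e_n+(1-\varepsilon)^{1/q}M_N$ (separation $\sim\varepsilon^{1/q}$, built from the $\ell_q$ geometry) are lifted to $z_n\in B(0,\rho_\varepsilon)$; and $y_\varepsilon$ is lifted to points $x_n$ with $\|x_n-z_n\|\leq(1+O(\varepsilon))\rho_\varepsilon$. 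Since every preimage of $y_\varepsilon$ --- in particular $x_n$ --- is at distance more than $R\approx 3\rho_\varepsilon$ from $z_\varepsilon$, the triangle inequality forces $\|z_\varepsilon-z_n\|>R-(1+O(\varepsilon))\rho_\varepsilon\approx 2\rho_\varepsilon$: the handle is long because a certain preimage is \emph{forbidden} from being close, which is information of exactly the opposite kind from what the co-Lipschitz inclusion provides. (Note also that the image configuration there is not a near-extremal $\ell_q$ fan at all: the $y_n$ sit in $B(0,D/3)$ at distance only about $D/3$ from $y_\varepsilon$; the near-diameter geometry appears only in the domain.) Without this supremum-witness mechanism, or a substitute for it, your argument cannot make $\delta$ small in Lemma \ref{geometric_lemma}, and the exponent mismatch between $1/p$ and $1/q$ never gets a chance to act.
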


Note that the case $q=2$ is the critical case here, the other cases are already proven by the result of \cite[Theorem 3.5]{BatesJohnsonLindenstraussPreissSchechtman1999}.  In fact, $\ell_q$ is not isomorphic to a linear quotient of $L_p[0,1]$ for $1<p<q<\infty$, $q\neq 2$, see \cite[Theorem 6.4.19]{AlbiacKalton2006}.  

	\begin{proof}
	Assume for contradiction that there is a $T : \ell_p \to \ell_q$ that is both uniformly and co-uniformly continuous.  We will adopt all the notation of the previous preliminary section for $T$.
	
	We have already seen that $c_d$ is non-decreasing and converges to $C\in (0,+\infty)$ as $d\to +\infty$.  For $0<\varepsilon<1$ pick $d_0>0$ so that 
	$$C-\varepsilon < c_{\frac{d_0}{3}} \leq C < C+\varepsilon.$$
	
	From the definition of $c_{d_0}$ as a supremum, and since $C+\varepsilon>c_{d_0}$, we get that there exist $z_\varepsilon \in \ell_p$, $R = r_\varepsilon \geq d_0$, and $y_\varepsilon \in \ell_q$ so that
  $\|y_\varepsilon - Tz_\varepsilon\| < (C+\varepsilon)R$
  yet $\|x-z_\varepsilon\| > R$ for all
  $x \in \ell_p$ with $Tx = y_\varepsilon$.  Note that $c_{d_0}R \leq \|Tz_\varepsilon - y_\varepsilon\|$
  since otherwise the fact that $R\geq d_0$ would imply $y_\varepsilon \in T(B(z_\varepsilon,R))$.  
    
  	\indent
  
  	Set $D := \|Tz_\varepsilon - y_\varepsilon\|$.
  Divide the line segment $[Tz_\varepsilon,y_\varepsilon]$
  into three segments of equal length:
  $[Tz_\varepsilon,m]$, $[m,M]$ and $[M,y_\varepsilon]$,
  \begin{equation*}
    c_{d_0} \frac{R}{3} \leq \|Tz_\varepsilon - m\|
    = \|m - M\| = \|M - y_\varepsilon\| = \frac{D}{3}
    < (C+\varepsilon) \frac{R}{3}.
  \end{equation*}
  
  	Now, note that 
	\begin{equation*}
	\|Tz_\varepsilon-m\| =\frac{D}{3} 
	< (C+\varepsilon)\frac{R}{3}
	=c_{\frac{d_0}{3}}\cdot \left(\frac{C+\varepsilon}{c_{\frac{d_0}{3}}}\right)\frac{R}{3}.	\end{equation*}
	
	To simplify the reading, we are going to introduce the notation $\rho_\varepsilon= \left(\frac{C+\varepsilon}{c_{\frac{d_0}{3}}}\right)\frac{R}{3}$.  We have	
	\begin{equation*}
	\rho_\varepsilon
	\geq \left(\frac{C+\varepsilon}{C}\right)\frac{R}{3} 
	\geq \frac{R}{3}\geq \frac{d_0}{3},
	\end{equation*}	
	so by definition of $\displaystyle c_{\frac{d_0}{3}}$ we get that $m = Tz$ for some $z \in \ell_p$ with
  $\|z_\varepsilon - z\| \le \rho_\varepsilon$.
  
  	\indent

  	Without loss of generality we will assume that
  $m= 0$ and that $z = 0$.  This allows us to write in particular that $y_\varepsilon=2M$.
  
  	Denote by $(e_n)_n$ the unit vector basis of $\ell_q$.  Let $M_N$ be a truncation of $M$  supported on $[1,N]$ so that $\|M-M_N\|< \varepsilon \frac{D}{3}$, and for $n>N$ set
	 \begin{equation*}
    y_n := \varepsilon^{1/q} \frac{D}{3} e_n
    + (1-\varepsilon)^{1/q} M_N.
  \end{equation*}
  	Then
  	\begin{equation*}
  	\begin{split}
    	\|y_n\|^q & = \varepsilon\left(\frac{D}{3}
    \right)^q + (1-\varepsilon)\|M_N\|^q\\
    &\leq \varepsilon\left(\frac{D}{3}
    \right)^q + (1-\varepsilon)\left(\frac{D}{3}\right)^q\\
        &= \left(\frac{D}{3}\right)^q.\\
    	\end{split}
  	\end{equation*}
	As a result, we also have $y_n=Tz_n$ for some $z_n \in \ell_p$ with $\|z_n-0\| \leq \rho_\varepsilon$.
	
	On the other hand,
	\begin{equation*}
	\begin{split}
	\|y_n-y_{\varepsilon}\|^q
	&=\left \|\varepsilon^{1/q} \frac{D}{3} e_n
    + (1-\varepsilon)^{1/q} M_N-2M\right \|^q\\
    	& =\left \|\varepsilon^{1/q} \frac{D}{3} e_n+2(M_N-M)
    + (1-\varepsilon)^{1/q} M_N-2M_N\right \|^q\\
    	&=\left\|\varepsilon^{1/q} \frac{D}{3} e_n+2(M_N-M)\right\|^q+\left(2-(1-\varepsilon)^{1/q}\right)^q\|M_N\|^q\\	
    	&\leq\left(\varepsilon^{1/q}\frac{D}{3}+2\|M_N-M\|\right)^q+\left(2-(1-\varepsilon)^{1/q}\right)^q\|M_N\|^q\\
	&\leq \left(\varepsilon^{1/q}\frac{D}{3}+2\varepsilon\frac{D}{3}\right)^q+\left(2-(1-\varepsilon)^{1/q}\right)^q\left(\frac{D}{3}\right)^q\\	
	&\leq 3^q\varepsilon\left(\frac{D}{3}\right)^q+\left(2-(1-\varepsilon)^{1/q}\right)^q\left(\frac{D}{3}\right)^q\text{ since } \varepsilon<1\\	
	& < 3^q\varepsilon \left(\frac{D}{3}\right)^q+\left(2-(1-\varepsilon)\right)^q\left(\frac{D}{3}\right)^q\text{ since } (1-\varepsilon)^{1/q}>1-\varepsilon\\
	&= 3^q\varepsilon \left(\frac{D}{3}\right)^q+(1+\varepsilon)^q\left(\frac{D}{3}\right)^q\\	&<(1+2\cdot3^q\varepsilon)\left(\frac{D}{3}\right)^q\text{ since } (1+\varepsilon)^q<1+(2^q-1)\varepsilon \text{ by convexity.}\\
	\end{split}
	\end{equation*}
	Taking $q$th roots, and using the fact that $(1+t)^{1/q}<1+\displaystyle \frac{t}{q}$ for $t>0$, we then get
	\begin{equation*}
	\begin{split}
	\|y_n-y_\varepsilon\| 
	&< \left(1+\frac{2\cdot3^q}{q}\varepsilon\right)\frac{D}{3}\\
	&< \left(1+\frac{2\cdot3^q}{q}\varepsilon\right)(C+\varepsilon)\frac{R}{3}\\
	&=c_{\frac{d_0}{3}}\left(1+\frac{2\cdot3^q}{q}\varepsilon\right)\rho_\varepsilon\\
	\end{split}
	\end{equation*}
	with
	$$\left(1+\frac{2\cdot3^q}{q}\varepsilon\right)\rho_\varepsilon\geq \rho_\varepsilon\geq \frac{R}{3}\geq \frac{d_0}{3},$$
	so again by definition of $c_{\frac{d_0}{3}}$, we get a point $x_n \in \ell_p$ with $Tx_n=y_\varepsilon$ and $\|x_n-z_n\| \leq \left(1+\frac{2\cdot3^q}{q}\varepsilon\right)\rho_\varepsilon$.

  \indent
  
  In summary we have $\|z_\varepsilon - x_n\| > R$, $\|z_\varepsilon\| \le \rho_\varepsilon$, and $\|z_n\| \le \rho_\varepsilon$, while
  $$\|z_n - x_n\| \le \left(1+\frac{2\cdot3^q}{q}\varepsilon\right)\rho_\varepsilon.$$
  
  Together these imply that
  \begin{equation*}
  \begin{split}
    \|z_\varepsilon - z_n\| &\ge \|z_\varepsilon - x_n\| - \|z_n -
    x_n\|\\
    &> R - \left(1+\frac{2\cdot3^q}{q}\varepsilon\right)\rho_\varepsilon\\
    &= \left(\frac{3c_{\frac{d_0}{3}}}{C+\varepsilon} - 1-\frac{2\cdot3^q}{q}\varepsilon\right)\rho_\varepsilon. \\
  \end{split}
  \end{equation*}
  
  \indent
  
  On the other hand, we have for $n \neq m$
  \begin{equation*}
    \|y_n - y_m\| = 
    \varepsilon^{1/q} \frac{D}{3}\|e_n-e_m\| = 2^{1/q}\varepsilon^{1/q}\frac{D}{3}>2^{1/q}\varepsilon^{1/q}(C-\varepsilon)\frac{R}{3}.
  \end{equation*}
  Now, we could very well have started with $\varepsilon$ small enough so that $C-\varepsilon>\displaystyle \frac{C}{2}$, and then chosen $d_0$ large enough so that 
  $$2^{1/q}\varepsilon^{1/q}\frac{C}{2}\frac{R}{3}\geq 2^{1/q}\varepsilon^{1/q}\frac{C}{2}\frac{d_0}{3}>\Omega(1)$$
  where $\Omega$ is the modulus of continuity of $T$.  With these choices, the uniform continuity of $T$ will give
  $$\Omega(1)< \|y_n-y_m\|=\|Tz_n-Tz_m\|\leq \Omega(\|z_n-z_m\|).$$
  Since $\Omega$ is nondecreasing, it then must follow that 
  $\|z_n-z_m\|>1$,
  and hence the Lipschitz condition for $T$ for distances larger than $1$ will give
  $$\|y_n-y_m\|\leq 2\Omega(1)\|z_n-z_m\|.$$
    
  \indent
  
  All these put together will give us that
  \begin{equation*}
  \begin{split}
  \|z_n-z_m\| &\geq \frac{1}{2\Omega(1)}2^{1/q}\varepsilon^{1/q}\frac{D}{3}\\
  & \geq \frac{1}{2^{1-1/q}\Omega(1)}\varepsilon^{1/q}c_{d_0}\frac{R}{3}\\
  &=\frac{1}{2^{1-1/q}\Omega(1)}\varepsilon^{1/q}\left(c_{d_0}\cdot\frac{c_{\frac{d_0}{3}}}{C+\varepsilon}\right)\rho_\varepsilon\\
  \end{split}
  \end{equation*}
  while $\|z_\varepsilon\|,\|z_n\|\leq\rho_\varepsilon.$
  Assuming that $\varepsilon$ is small enough so that the quantification of property $(\beta)$ in Lemma \ref{geometric_lemma} applies, we get that
  \begin{equation*}
\frac{1}{2^{1-1/q}\Omega(1)}\varepsilon^{1/q}\left(c_{d_0}\cdot\frac{c_{\frac{d_0}{3}}}{C+\varepsilon}\right)
\leq C_p\left[2-\frac{3c_{\frac{d_0}{3}}}{C+\varepsilon}+1+\frac{2\cdot3^q}{q}\varepsilon\right]^{1/p} .
  \end{equation*}
  Since $\displaystyle C-\varepsilon<c_{\frac{d_0}{3}}\leq c_{d_0}\leq C$, we get
  \begin{equation*}
\frac{1}{2^{1-1/q}\Omega(1)}\varepsilon^{1/q}\frac{(C-\varepsilon)^2}{C+\varepsilon}
\leq C_p\left[3-\frac{3C}{C+\varepsilon}+\frac{2\cdot3^q}{q}\varepsilon\right]^{1/p}, 
  \end{equation*}
  then using the fact that $\displaystyle 1-\frac{2\varepsilon}{C} \leq \frac{C}{C+\varepsilon}$, we get
  \begin{equation*}
  \begin{split}
	\frac{1}{2^{1-1/q}\Omega(1)}\varepsilon^{1/q}\frac{(C-\varepsilon)^2}{C+\varepsilon}
	&\leq C_p\left[3-3\left(1-\frac{2\varepsilon}{C}\right)+\frac{2\cdot3^q}{q}\varepsilon\right]^{1/p} \\
	&=C_p\left[\frac{6}{C}+\frac{2\cdot3^q}{q}\right]^{1/p}\varepsilon^{1/p}\\
	\end{split}
  \end{equation*}
  Hence
  $$\frac{1}{2^{1-1/q}\Omega(1)}\frac{(C-\varepsilon)^2}{C+\varepsilon}
  \leq C_p\left[\frac{6}{C}+\frac{2\cdot3^q}{q}\right]^{1/p}\varepsilon^{1/p-1/q}.$$
  Letting $\varepsilon \to 0$ gives a contradiction since $p<q$.	
	\end{proof}

\indent

We make use of the Lipschitz and co-Lipschitz for large distances principles in the above argument.  In the Lipschitz quotient case where we do not need to require metric convexity, a similar argument with slightly easier computations gives us the following.

\begin{thm}
$\ell_q$ is not a Lipschitz quotient of any subset of $\ell_p$ for any $1<p<q<\infty$.  
\end{thm}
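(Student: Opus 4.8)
The plan is to run the argument of Theorem~\ref{main} almost verbatim, exploiting the fact that for a genuine Lipschitz quotient both the Lipschitz and the co-Lipschitz estimates hold with a single scale-independent constant. Write $S \subseteq \ell_p$ and suppose for contradiction that $T : S \to \ell_q$ is a Lipschitz quotient with Lipschitz constant $L$, so that $\|Tu - Tv\| \le L\|u-v\|$ for all $u,v \in S$. Let $c$ be the supremum of all $c' > 0$ such that, for every $r>0$ and every $x \in S$, the condition $\|y - Tx\| < c'r$ forces $y = Tx'$ for some $x' \in S$ with $\|x-x'\| \le r$; exactly as in the remark that $c_d$ is attained, the strict inequality makes $c$ itself admissible. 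The crucial simplification over Theorem~\ref{main} is that $c$ and $L$ no longer depend on the scale $d$, so the ``Lipschitz for large distances'' and ``co-Lipschitz for large distances'' principles (Lemmas~\ref{lip4lrgdist} and~\ref{colip4lrgdist}) are never invoked; in particular metric convexity of the domain is not needed, which is precisely what lets $S$ be an arbitrary subset rather than all of $\ell_p$.

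Next I would reproduce the fan construction. Fix $0 < \varepsilon < 1$ and, using that $c$ is a supremum, choose $z_\varepsilon \in S$, $R > 0$, and $y_\varepsilon \in \ell_q$ with $\|y_\varepsilon - Tz_\varepsilon\| < (c+\varepsilon)R$ while $\|x - z_\varepsilon\| > R$ for every preimage $x$ of $y_\varepsilon$; as before $D := \|Tz_\varepsilon - y_\varepsilon\| \ge cR$. All subdivisions of the segment $[Tz_\varepsilon, y_\varepsilon]$ take place in $\ell_q$, which is a Banach space, so this part is unaffected by passing to a subset. Trisecting this segment, the $\tfrac13$-point pulls back to a point within $\rho_\varepsilon := \tfrac{c+\varepsilon}{c}\tfrac{R}{3}$ of $z_\varepsilon$, which I normalize to $0$ (with the $\tfrac23$-point $M$ so that $y_\varepsilon = 2M$). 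Truncating $M$ to $M_N$ and setting $y_n = \varepsilon^{1/q}\tfrac{D}{3}e_n + (1-\varepsilon)^{1/q}M_N$ produces points of norm at most $\tfrac{D}{3} < c\rho_\varepsilon$, hence preimages $z_n$ with $\|z_n\| \le \rho_\varepsilon$, while $\|y_n - y_\varepsilon\| < c\bigl(1 + \tfrac{2\cdot 3^q}{q}\varepsilon\bigr)\rho_\varepsilon$ yields preimages $x_n$ of $y_\varepsilon$ with $\|x_n - z_n\| \le \bigl(1 + \tfrac{2\cdot 3^q}{q}\varepsilon\bigr)\rho_\varepsilon$. These are the same estimates as in Theorem~\ref{main}, with $c$ replacing $C$ and $c_{d_0/3}$.

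Finally I would extract the contradiction. The triangle inequality gives $\|z_\varepsilon - z_n\| > \bigl(\tfrac{3c}{c+\varepsilon} - 1 - \tfrac{2\cdot 3^q}{q}\varepsilon\bigr)\rho_\varepsilon$, so after dividing by $\rho_\varepsilon$ the normalized points $z_\varepsilon/\rho_\varepsilon$ and $z_n/\rho_\varepsilon$ lie in $B_{\ell_p}$ with $\|z_\varepsilon/\rho_\varepsilon - z_n/\rho_\varepsilon\| > 2 - \delta$, where $\delta = \tfrac{3\varepsilon}{c+\varepsilon} + \tfrac{2\cdot 3^q}{q}\varepsilon$ is of order $\varepsilon$. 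Here is the computational simplification: applying the global Lipschitz bound directly, $\|z_n - z_m\| \ge \tfrac1L\|y_n - y_m\| = \tfrac1L 2^{1/q}\varepsilon^{1/q}\tfrac{D}{3} \ge \tfrac1L 2^{1/q}\varepsilon^{1/q}c\tfrac{R}{3}$, so the normalized separation $\seqsep(\{z_n/\rho_\varepsilon\})$ is bounded below by a fixed positive multiple of $\varepsilon^{1/q}$, with no recourse to the modulus of continuity $\Omega$ or to forcing $\|z_n-z_m\| > 1$. Lemma~\ref{geometric_lemma}, applied in $\ell_p$ to this fan, then yields an inequality of the form $(\text{positive const})\,\varepsilon^{1/q} \le C_p\,\delta^{1/p} = (\text{positive const})\,\varepsilon^{1/p}$; dividing by $\varepsilon^{1/q}$ leaves a positive constant on the left and $\varepsilon^{1/p - 1/q} \to 0$ on the right as $\varepsilon \to 0$, a contradiction since $p < q$. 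I expect no genuine obstacle: the only points needing care are verifying that $c$ is attained and scale-independent, and confirming that using a subset $S$ affects nothing, since every segment subdivision is carried out in the Banach space $\ell_q$ and Lemma~\ref{geometric_lemma} only requires the normalized vectors to belong to $B_{\ell_p}$.
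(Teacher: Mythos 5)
Your proposal is correct and is essentially the paper's own argument: the paper proves this theorem precisely by the remark you flesh out, namely that for a genuine Lipschitz quotient the constants $L$ and $c$ are scale-independent, so the ``for large distances'' principles (and hence metric convexity of the domain) are never invoked, and the fan construction of Theorem \ref{main} runs unchanged on an arbitrary subset $S \subseteq \ell_p$, with Lemma \ref{geometric_lemma} applied to the normalized points in $B_{\ell_p}$. Your replacement of the modulus-of-continuity bookkeeping by the single bound $\|z_n - z_m\| \ge \frac{1}{L}\|y_n - y_m\|$ is exactly the ``slightly easier computations'' the paper alludes to.
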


In \cite{MendelNaorExtendedAbstract2008}, Mendel and Naor show the same result for $2\leq p<q<\infty$ through the use of a technique called Markov convexity.  

\indent

Our technique also gives us the following result.  

\begin{thm}
$c_0$ cannot be a uniform quotient of (or a Lipschitz quotient of a subset of) a Banach space with property $(\beta)$.
\end{thm}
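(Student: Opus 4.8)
The plan is to run exactly the scheme of the proof of Theorem~\ref{main}, replacing the target $\ell_q$ by $c_0$ and the domain $\ell_p$ by an arbitrary Banach space $X$ with property $(\beta)$. The payoff of having $c_0$ as target is that we can separate the pulled-back ``tips'' of the fork by a \emph{fixed} proportion of the handle length, rather than by an amount that decays like $\varepsilon^{1/q}$; because of this the final contradiction will come directly from the characterization of property $(\beta)$ recalled in Section~\ref{beta}, with no need for its quantified $\ell_p$-version in Lemma~\ref{geometric_lemma}.

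So suppose for contradiction that $T : X \to c_0$ is a surjective uniform quotient. First I would reproduce, verbatim, the opening of the proof of Theorem~\ref{main}: pass to the constants $c_d \nearrow C \in (0,+\infty)$, fix a small $\varepsilon$ and a large $d_0$, and produce $z_\varepsilon$, $R \ge d_0$, $y_\varepsilon$ with $\|y_\varepsilon - Tz_\varepsilon\| < (C+\varepsilon)R$ but $y_\varepsilon \notin T(B(z_\varepsilon,R))$. Setting $D := \|Tz_\varepsilon - y_\varepsilon\|$, I trisect the segment $[Tz_\varepsilon, y_\varepsilon]$, use co-uniform continuity (Lemma~\ref{colip4lrgdist}) to pull the midpoint $m$ back to a point $z$ with $Tz = m$, and normalize by the isometry $u \mapsto u - z$ of $X$ so that $m = 0$ and $z = 0$; then $y_\varepsilon = 2M$ with $\|M\|_\infty = D/3$.

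The one genuinely different step is the construction of the tips. Using the unit vector basis $(e_n)$ of $c_0$ and a truncation $M_N$ of $M$ with $\|M - M_N\|_\infty < \varepsilon D/3$, I would set $y_n := M_N + \tfrac{D}{3} e_n$ for $n > N$. Since $M_N$ and $e_n$ have disjoint supports, $\|y_n\|_\infty = \max\{\|M_N\|_\infty, D/3\} = D/3$, so each $y_n$ pulls back to a point $z_n$ with $\|z_n\| \le \rho_\varepsilon$; a coordinatewise estimate gives $\|y_n - y_\varepsilon\|_\infty \le (1 + 2\varepsilon)\tfrac{D}{3}$, so $y_\varepsilon$ is reached from $z_n$ by a point $x_n$ with $\|x_n - z_n\| \le (1+2\varepsilon)\rho_\varepsilon$. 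Exactly as in Theorem~\ref{main}, $\|z_\varepsilon - x_n\| > R$ together with $\|z_\varepsilon\|, \|z_n\| \le \rho_\varepsilon$ forces the handle length $\|z_\varepsilon - z_n\|$ to exceed $\bigl(3c_{\frac{d_0}{3}}/(C+\varepsilon) - 1 - 2\varepsilon\bigr)\rho_\varepsilon$, which tends to $2\rho_\varepsilon$. The decisive difference is in the separation: since $\|y_n - y_m\|_\infty = \tfrac{D}{3}\|e_n - e_m\|_\infty = \tfrac{D}{3}$ for $n \ne m$, the ``Lipschitz for large distances'' principle (Lemma~\ref{lip4lrgdist}), applied after choosing $d_0$ so large that $D/3 > \Omega(1)$, yields $\|z_n - z_m\| \ge \tfrac{D}{6\Omega(1)} \ge \tfrac{1}{2\Omega(1)}\tfrac{(C-\varepsilon)^2}{C+\varepsilon}\rho_\varepsilon$, a \emph{fixed} positive multiple of $\rho_\varepsilon$.

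To finish I would normalize the fork into $B_X$: put $x := -z_\varepsilon/\rho_\varepsilon$ and $x_n := z_n/\rho_\varepsilon$, all in $B_X$. Then $\seqsep(\{x_n\})$ is bounded below, uniformly for all small $\varepsilon$, by a constant $\varepsilon_0 > 0$ (one may take $\varepsilon_0 = C/(4\Omega(1))$), while $\|x_n + x\| = \|z_n - z_\varepsilon\|/\rho_\varepsilon > 2 - \delta$ for all $n$, with $\delta \to 0$ as $\varepsilon \to 0$. Property $(\beta)$ applied to this fixed $\varepsilon_0$ furnishes a $\delta' > 0$ such that some index $i$ must satisfy $\|\tfrac{x_i + x}{2}\| \le 1 - \delta'$, i.e. $\|x_i + x\| \le 2 - 2\delta'$; choosing $\varepsilon$ small enough that $\delta < 2\delta'$ contradicts $\|x_n + x\| > 2 - \delta$ for every $n$. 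For the Lipschitz-quotient-of-a-subset variant the same construction goes through with the genuine (co-)Lipschitz constants in place of the large-distance principles, so metric convexity of the domain is never used and the bookkeeping only simplifies; the fork points still lie in the ambient space $X$, so property $(\beta)$ of $X$ applies regardless of the subset. The main thing to watch is therefore purely administrative: order the choices of $\varepsilon$ (small, to drive $\delta \to 0$ and keep $\varepsilon_0$ bounded below) and $d_0$ (large, to push the separated images $y_n, y_m$ into the large-distance regime) so that all the quantifiers line up, exactly as in Theorem~\ref{main}.
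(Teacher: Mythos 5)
Your proposal is correct and follows essentially the same route as the paper's own proof: the same tips $y_n = M_N + \tfrac{D}{3}e_n$, the same $(1+2\varepsilon)\tfrac{D}{3}$ estimate exploiting the sup-norm, the same fixed separation $\tfrac{C}{4\Omega(1)}$ of the rescaled fork, and the same direct appeal to Kutzarova's characterization of property $(\beta)$ (with $\delta'$ independent of $\varepsilon$) in place of Lemma \ref{geometric_lemma}. The only quibbles are cosmetic: $m$ is the first trisection point rather than the midpoint of $[Tz_\varepsilon,y_\varepsilon]$, and you reuse the symbol $x_n$ for both the pullbacks of $y_\varepsilon$ and the normalized fork tips.
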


The result of \cite[Theorem 3.5]{BatesJohnsonLindenstraussPreissSchechtman1999} gives that $c_0$ is not a uniform quotient of a superreflexive Banach space, but as shown in \cite{Kutzarova1989} and \cite{MontesinosTorregrosa1992} there are Banach spaces that satisfy property $(\beta)$ but are not superreflexive.

	\begin{proof}
	Let $X$ be a Banach space with property $(\beta)$.  We will prove that $c_0$ cannot be a uniform quotient of $X$.  The argument for the Lipschitz case is similar.
	
	Assume for contradiction that $T:X \to c_0$ is a uniform quotient map.  We follow exactly the same proof as for Theorem \ref{main} until the choice of $y_n$.  Instead, we set
	$$y_n:=\frac{D}{3}e_n+M_N.$$
	Then we have 
	\begin{equation*}
	\begin{split}
	\|y_n-y_{\varepsilon}\|
	&=\left \|\frac{D}{3} e_n+ M_N-2M\right \|\\
    	& =\left \|\frac{D}{3} e_n+2(M_N-M)-M_N\right \|\\
    	&=\max \left\{\left\|\frac{D}{3} e_n+2(M_N-M)\right\|, \|M_N\|\right\}\\
	&< \max \left \{ \left(\frac{D}{3}+2\varepsilon\frac{D}{3}\right),\frac{D}{3}\right\}\\	
	&=(1+2\varepsilon)\frac{D}{3}.
	\end{split}
	\end{equation*}
	Choosing $z_n$ (and $x_n$) similarly as in the proof of Theorem \ref{main}, we then have that $\|z_\varepsilon\|, \|z_n\| \leq \rho_\varepsilon$, while
	$$\|z_n-z_\varepsilon\|\geq \left(\frac{3c_{\frac{d_0}{3}}}{C+\varepsilon} - 1-2\varepsilon\right)
   \rho_\varepsilon ,$$	
	and
	$$\|z_n-z_m\| 
	\geq \frac{1}{2\Omega(1)} \|y_n-y_m\|
	\geq \frac{1}{2\Omega(1)}\left(c_{d_0}\cdot\frac{c_{\frac{d_0}{3}}}{C+\varepsilon}\right)\rho_\varepsilon.$$	
	Since 
	$$\frac{1}{2\Omega(1)}\left(c_{d_0}\cdot\frac{c_{\frac{d_0}{3}}}{C+\varepsilon}\right)
	\geq \frac{1}{2\Omega(1)}\left(\frac{(C-\varepsilon)^2}{C+\varepsilon}\right)
	\geq \frac{1}{2\Omega(1)}\cdot\frac{C}{2}>0,$$ 
	the property $(\beta)$ of $X$ will give us a number $\delta>0$ independent of $\varepsilon$, and an index $i$ such that
	$$\|z_i-z_\varepsilon\| \leq (2-2\delta)\rho_\varepsilon.$$
	This then gives us
	$$\frac{3(C-\varepsilon)}{C+\varepsilon} - 1-2\varepsilon\leq 2-2\delta,$$
	a contradiction as $\varepsilon \to 0$. 
	\end{proof}

\indent

As a corollary to Theorem \ref{main} we get the following characterization of all Banach spaces that are uniform quotients of $\ell_p$ for $1<p<2$.

\begin{thm}
If a Banach space $X$ is a uniform quotient of $\ell_p$ for $1<p<2$, then $X$ has to be linearly isomorphic to a linear quotient of $\ell_p$.
\end{thm}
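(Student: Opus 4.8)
The plan is to combine the authors' Theorem \ref{main} with the theorem of Bates, Johnson, Lindenstrauss, Preiss and Schechtman and the classical Johnson--Odell description of subspaces of $L_{p'}$. First I would record that a uniform quotient $X$ of $\ell_p$ is separable, being a continuous surjective image of the separable space $\ell_p$, and that, since $\ell_p$ is superreflexive, \cite{BatesJohnsonLindenstraussPreissSchechtman1999} produces a linear quotient map from some ultrapower $(\ell_p)_{\mathcal U}$ onto $X$. An ultrapower of $\ell_p$ is an abstract $L_p$-space, i.e.\ isometric to $L_p(\mu)$ for some measure $\mu$, so $X$ is a linear quotient of $L_p(\mu)$; in particular $X$ is reflexive. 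Dualizing, $X^*$ is isomorphic to a subspace of $L_{p'}(\mu)$, where $p' = p/(p-1)$, and this subspace is separable because $X$ is separable and reflexive; any separable subspace of $L_{p'}(\mu)$ embeds into $L_{p'}[0,1]$. The hypothesis $1<p<2$ gives $p' > 2$, which is exactly the range in which the relevant subspace theory applies. The goal then becomes to show that $X^*$ contains no isomorphic copy of $\ell_2$: granting this, the Johnson--Odell theorem (see also \cite{AlbiacKalton2006}) asserts that a subspace of $L_{p'}$ with $p' > 2$ not containing $\ell_2$ already embeds into $\ell_{p'}$, and one more dualization finishes the argument.

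The crux is ruling out $\ell_2 \hookrightarrow X^*$, and this is where Theorem \ref{main} enters. Since $X$ is reflexive, $\ell_2 = \ell_2^*$ embeds isomorphically into $X^*$ if and only if $\ell_2$ is a linear quotient of $X$: the adjoint of an isomorphic embedding $\ell_2 \to X^*$ is a surjection $X = X^{**} \to \ell_2$, and conversely. So suppose toward a contradiction that $Q : X \to \ell_2$ is a linear quotient map. A surjective linear quotient map is Lipschitz and co-Lipschitz, hence a uniform quotient map; and the composition of two uniform quotient maps is again a uniform quotient map, since both uniform continuity and co-uniform continuity compose (the latter by feeding the $\delta$ of the outer map into the definition of the inner map). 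Composing $Q$ with the given uniform quotient map $\ell_p \to X$ would therefore exhibit $\ell_2$ as a uniform quotient of $\ell_p$ with $1<p<2$, contradicting the case $q = 2$ of Theorem \ref{main}. Hence $\ell_2$ is not a quotient of $X$, and $X^*$ contains no copy of $\ell_2$.

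With $\ell_2 \not\hookrightarrow X^*$ established, the Johnson--Odell theorem yields an isomorphic embedding $X^* \hookrightarrow \ell_{p'}$. Taking adjoints once more and using reflexivity, $X = X^{**}$ is a linear quotient of $\ell_{p'}^* = \ell_p$, which is the desired conclusion. The restriction $1<p<2$ is used in two essential ways: it forces $p' > 2$, so that the Johnson--Odell dichotomy is available and $\ell_2$ is precisely the single obstruction separating subspaces of $\ell_{p'}$ from subspaces of $L_{p'}$; and it is the range of Theorem \ref{main} with $q = 2$ needed to eliminate that obstruction.

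I expect the main difficulty to be organizational rather than technical: keeping the duality bookkeeping straight (reflexivity and separability of $X$ and $X^*$, the identification of the ultrapower with an $L_p(\mu)$-space, and the passage from $L_{p'}(\mu)$ to $L_{p'}[0,1]$), together with a careful verification that the composition of uniform quotient maps is again a uniform quotient map, so that Theorem \ref{main} genuinely applies to $\ell_2$. The conceptual content is carried entirely by the reduction ``$X$ a linear quotient of $L_p$ with $\ell_2$ excluded as a quotient $\Rightarrow$ $X$ a linear quotient of $\ell_p$,'' with Theorem \ref{main} supplying exactly the exclusion of $\ell_2$.
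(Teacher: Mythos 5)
Your proposal is correct and follows essentially the same route as the paper: invoke Bates--Johnson--Lindenstrauss--Preiss--Schechtman to realize $X$ as a linear quotient of an $L_p$-space, use the composition of the linear quotient map $X \to \ell_2$ with the uniform quotient $\ell_p \to X$ against Theorem \ref{main} to rule out $\ell_2$ as a quotient of $X$, and then apply Johnson--Odell to conclude that $X$ is a linear quotient of $\ell_p$. The only difference is presentational: the paper cites Johnson--Odell directly in its dual (quotient) form, whereas you carefully unpack the duality bookkeeping (reflexivity, separability, $X^* \hookrightarrow L_{p'}$ with $p'>2$, exclusion of $\ell_2 \hookrightarrow X^*$) that the citation encapsulates.
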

	\begin{proof}
	Bates, Johnson, Lindenstrauss, Preiss and Schechtman assert in \cite{BatesJohnsonLindenstraussPreissSchechtman1999} that $X$ has to be linearly isomorphic to a linear quotient of $L_p[0,1]$.  A result of Johnson and Odell \cite{JohnsonOdell1974} shows that if $\ell_2$ is not isomorphic to a linear quotient of $X$, then $X$ will be isomorphic to a linear quotient of $\ell_p$.
	
	But $\ell_2$ cannot be a linear quotient of $X$ since the composition of such a linear quotient map and the uniform quotient from $\ell_p$ to $X$ would give a uniform quotient from $\ell_p$ to $\ell_2$, contradicting Theorem \ref{main}.
	\end{proof}

\section{Acknowledgement}
We thank William B. Johnson for his comments over the first draft of this article.

\begin{bibsection}
\begin{biblist}

\bib{AlbiacKalton2006}{book}{
  author={Albiac, F.},
  author={Kalton, N. J.},
  title={Topics in Banach space theory},
  series={Graduate Texts in Mathematics},
  volume={233},
  publisher={Springer},
  place={New York},
  date={2006},
  pages={xii+373},
}

\bib{AyerbeDominguez_BenavidesCutillas1994}{article}{
  author={Ayerbe, J. M.},
  author={Dom\'inguez Benavides, T.},
  author={Cutillas, S. F.},
  title={Some noncompact convexity moduli for the property $(\beta )$ of Rolewicz},
  journal={Comm. Appl. Nonlinear Anal.},
  volume={1},
  date={1994},
  number={1},
  pages={87--98},
}

\bib{BatesJohnsonLindenstraussPreissSchechtman1999}{article}{
  author={Bates, S. M.},
  author={Johnson, W. B.},
  author={Lindenstrauss, J.},
  author={Preiss, D.},
  author={Schechtman, G.},
  title={Affine approximation of Lipschitz functions and nonlinear quotients},
  journal={Geom. Funct. Anal.},
  volume={9},
  date={1999},
  number={6},
  pages={1092--1127},
}

\bib{BenyaminiLindenstrauss2000}{book}{
  author={Benyamini, Y.},
  author={Lindenstrauss, J.},
  title={Geometric nonlinear functional analysis. Vol. 1},
  series={American Mathematical Society Colloquium Publications},
  volume={48},
  publisher={American Mathematical Society},
  place={Providence, RI},
  date={2000},
}

\bib{JohnsonLindenstraussPreissSchechtman2002}{article}{
  author={Johnson, W. B.},
  author={Lindenstrauss, J.},
  author={Preiss, D.},
  author={Schechtman, G.},
  title={Almost Fr\'echet differentiability of Lipschitz mappings between infinite-dimensional Banach spaces},
  journal={Proc. London Math. Soc. (3)},
  volume={84},
  date={2002},
  number={3},
  pages={711--746},
}

\bib{JohnsonOdell1974}{article}{
  author={Johnson, W. B.},
  author={Odell, E.},
  title={Subspaces of $L\sb {p}$ which embed into $l\sb {p}$},
  journal={Compositio Math.},
  volume={28},
  date={1974},
  pages={37\ndash 49},
}

\bib{Kutzarova1989}{article}{
  author={Kutzarova, D. N.},
  title={On condition $(\beta )$ and $\Delta $-uniform convexity},
  journal={C. R. Acad. Bulgare Sci.},
  volume={42},
  date={1989},
  number={1},
  pages={15--18},
}

\bib{Kutzarova1991}{article}{
  author={Kutzarova, D. N.},
  title={$k$-$\beta $ and $k$-nearly uniformly convex Banach spaces},
  journal={J. Math. Anal. Appl.},
  volume={162},
  date={1991},
  number={2},
  pages={322--338},
}

\bib{MendelNaorExtendedAbstract2008}{article}{
  author={Mendel, M.},
  author={Naor, A.},
  title={Markov convexity and local rigidity of distorted metrics [extended abstract]},
  conference={ title={Computational geometry (SCG'08)}, address={College Park, Maryland}, date={2008-06}, },
  book={ publisher={ACM}, place={New York}, date={2008}, },
  pages={49--58},
}

\bib{MontesinosTorregrosa1992}{article}{
  author={Montesinos, V.},
  author={Torregrosa, J. R.},
  title={A uniform geometric property of Banach spaces},
  journal={Rocky Mountain J. Math.},
  volume={22},
  date={1992},
  number={2},
  pages={683--690},
}

\bib{Rolewicz1987}{article}{
  author={Rolewicz, S.},
  title={On drop property},
  journal={Studia Math.},
  volume={85},
  date={1987},
  pages={27--35},
}

\bib{Rolewicz1987too}{article}{
  author={Rolewicz, S.},
  title={On $\Delta $ uniform convexity and drop property},
  journal={Studia Math.},
  volume={87},
  date={1987},
  pages={181--191},
}

\end{biblist}
\end{bibsection}

\end{document}